\documentclass[reqno]{amsart}
\usepackage{amssymb,latexsym}
\usepackage{graphicx}
\usepackage{fancyhdr}
\numberwithin{equation}{section}
\newtheorem{thm}{Theorem}[section]
\newtheorem{theorem}[thm]{Theorem}

\newtheorem{corollary}[thm]{Corollary}

\begin{document}

\setcounter{page}{1}

\title[A unification of summation formulas]{A unification of Euler-Maclaurin and Euler-Boole summation formulas }
\thanks{2020 Mathematics Subject Classification. 65B15; 42A16; 11B68.}\thanks{Keywords. Euler-Maclaurin summation formula; Euler-Boole summation formula; Generalized Apostol-Bernoulli polynomials; Generalized Apostol-Bernoulli functions.}
\author{Yuan He}
\address{School of Mathematics and Big Data, Neijiang Normal University, Neijiang 641100, Sichuan, People's Republic of China}
\email{hyyhe@aliyun.com}

\begin{abstract}
In this paper, we establish a summation formula associated with the generalized Apostol-Bernoulli functions. This formula unifies the Euler-Maclaurin summation formula, the Euler-Boole summation formula, and the character analogues of these two formulas. We also apply it to obtain the general power sum formula and the special values of Berndt's generalized $L$-function at non-positive integers.
\end{abstract}

\maketitle

\section{Introduction}\label{sec1}

Throughout this paper, let $\mathbb{N}$ be the set of positive integers, $\mathbb{N}_{0}$ the set of non-negative integers, $\mathbb{Z}$ the set of integers, $\mathbb{R}$ the set of real numbers, and $\mathbb{C}$ the set of complex numbers. It is a fundamental fact that the summation and approximation of infinite series constitute central problems in analysis. Since the 18th century, mathematicians have sought universal methods for determining the sums of convergent series with increasing rigor. In this context, Euler \cite{euler1} and Maclaurin \cite{maclaurin} independently developed what is now known as the Euler-Maclaurin summation formula:
\begin{eqnarray}\label{eq1.1}
\sum_{k=1}^{n}f(k)&=&\int_{0}^{n}f(t)dt+\frac{f(n)-f(0)}{2}\nonumber\\
&&+\sum_{k=1}^{\infty}\frac{B_{2k}}{(2k)!}\bigl(f^{(2k-1)}(n)-f^{(2k-1)}(0)\bigl),
\end{eqnarray}
where $f$ is a nice function, $f^{(m)}(x)$ denotes the $m$-th derivative of $f(x)$ with respect to $x$, and $B_{n}$ are the Bernoulli numbers (as they appear in \eqref{eq1.3} below with $B_{n}=B_{n}(0)$, given by Euler \cite{euler4}).
Since $|B_{2k}|$ grows rather rapidly (in fact, super-exponentially; see \cite[p. 267]{apostol3} for details), the series on the right side of \eqref{eq1.1} will seldom converge. This means the exact theory of \eqref{eq1.1} belongs more to the theory of asymptotic series than to that of summable series. Nevertheless, it is very important in many branches of analysis; see Varadarajan \cite{varadarajan} for comments on Euler's use of \eqref{eq1.1}.

To use \eqref{eq1.1} effectively one must have remainder terms, and this was carried out by Euler's successors. Poisson \cite{poisson} was the first to seriously discuss remainder terms, and the first fully rigorous treatment was given by Jacobi \cite{jacobi}. Furthermore, Hardy, in his book \cite[Chapter 13]{hardy2}, established the summability of the Euler-Maclaurin formula under some fairly general conditions. Cohen, in his book \cite[Theorem 9.2.2]{cohen}, wrote its general form as:
\begin{eqnarray}\label{eq1.2}
\sum_{\substack{a<k\leqslant b\\k\in\mathbb{Z}}}f(k)&=&\int_{a}^{b}f(t)dt+\sum_{k=0}^{m}\frac{(-1)^{k+1}}{(k+1)!}\bigl(B_{k+1}(\{b\})f^{(k)}(b)-B_{k+1}(\{a\})f^{(k)}(a)\bigl)\nonumber\\
&&+\frac{(-1)^{m}}{(m+1)!}\int_{a}^{b}B_{m+1}(\{t\})f^{(m+1)}(t)dt,
\end{eqnarray}
where $m\in\mathbb{N}_{0}$, $a,b\in\mathbb{R}$ with $a<b$, $f(t)$ is $m+1$
times continuously differentiable on the interval $[a,b]$, $B_{n}(\{x\})$ are the periodic extensions of the Bernoulli polynomials of degree $n$ with $\{x\}$ being the fractional part of $x\in\mathbb{R}$, and the Bernoulli polynomials of degree $n$ are defined by the generating function
\begin{equation}\label{eq1.3}
\frac{te^{xt}}{e^{t}-1}=\sum_{n=0}^{\infty}B_{n}(x)\frac{t^{n}}{n!}\quad(|t|<2\pi).
\end{equation}
An immediate application of \eqref{eq1.2} with $f(t)=t^{m}$, $a=0$ and $b=n$ gives the power sum formula (see, e.g., \cite[pp. 433--434]{euler4} or \cite[p. 230]{ireland}):
\begin{equation}\label{eq1.4}
\sum_{k=1}^{n}k^{m}=\frac{1}{m+1}\sum_{k=0}^{m}\binom{m+1}{k}(-1)^{k}B_{k}n^{m+1-k},
\end{equation}
where $n\in\mathbb{N}$ and $m\in\mathbb{N}_{0}$. Another direct application of \eqref{eq1.2} with $f(t)=t^{-1}$, $a=1$, $b=n$ and $m=0$ yields
the so-called Euler constant (see, e.g., \cite{euler2,knuth}):
\begin{equation}\label{eq1.5}
\gamma=\underset{n\rightarrow \infty}{\lim}\biggl(\sum_{k=1}^{n}\frac{1}{k}-\log n\biggl)=0.57721566490\cdots.
\end{equation}
In addition, taking $f(t)=\log t$, $a=1$, $b=n$ and setting $m=1$ in \eqref{eq1.2} recovers the formula of Stirling \cite{stirling}, known as Stirling's formula:
\begin{equation}\label{eq1.6}
n!\sim \sqrt{2\pi n}\biggl(\frac{n}{e}\biggl)^{n},
\end{equation}
where $\sim$ denotes asymptotic equivalence as $n\rightarrow\infty$. For the complex form of \eqref{eq1.6}, the reader is referred to \cite[Proposition 9.6.27]{cohen}; this can be regarded as a further application of \eqref{eq1.2}. We also remark that Titchmarsh, in his book \cite[pp. 13--15]{titchmarsh}, showed that the case $m=0$ in \eqref{eq1.2} can be used to prove Riemann's functional equation \cite{riemann} for the Riemann zeta function. Moreover, Berndt \cite{berndt1,berndt2} used \eqref{eq1.2}, with $m=0$, to prove Hurwitz's formula \cite{hurwitz} for the Hurwitz zeta-function and Lerch's functional equation \cite{lerch} for the Lerch zeta-function. Later, in 1975, Berndt \cite[Theorem 4.1]{berndt3} established the character analogue of \eqref{eq1.2}: Let $q\in\mathbb{N}$, $m\in\mathbb{N}_{0}$ and $a,b\in\mathbb{R}$ with $q\geqslant2$ and $a<b$. Let $\chi$ be a primitive Dirichlet character modulo $q$ and let $f(t)$ be an $m+1$
times continuously differentiable function on the interval $[a,b]$. Then
\begin{eqnarray}\label{eq1.7}
\sideset{}{'}\sum_{\substack{a\leqslant k\leqslant b\\k\in\mathbb{Z}}}\chi(k)f(k)&=&\chi(-1)\sum_{k=0}^{m}\frac{(-1)^{k+1}}{(k+1)!}\bigl(\overline{B}_{k+1}(b,\chi)f^{(k)}(b)-\overline{B}_{k+1}(a,\chi)f^{(k)}(a)\bigl)\nonumber\\
&&+\chi(-1)\frac{(-1)^{m}}{(m+1)!}\int_{a}^{b}\overline{B}_{m+1}(t,\chi)f^{(m+1)}(t)dt,
\end{eqnarray}
where the dash indicates that if $n=a$ or $n=b$, then only $\frac{1}{2}\chi(a)f(a)$ or $\frac{1}{2}\chi(b)f(b)$, respectively, is
counted, and $\overline{B}_{n}(x,\chi)$ are the generalized Bernoulli functions given by
\begin{equation*}
\overline{B}_{n}(x,\chi)=q^{n-1}\sum_{k=1}^{q}\chi(k)\overline{B}_{n}\biggl(\frac{x+k}{q}\biggl),
\end{equation*}
in which $\overline{B}_{n}(x)$ is the $n$-th Bernoulli function, defined for $n\in\mathbb{N}$ and $x\in\mathbb{R}$ as follows: $\overline{B}_{1}(x)=0$ if $x\in\mathbb{Z}$, and $\overline{B}_{1}(x)=B_{1}(\{x\})$ if $x\in\mathbb{R}\setminus\mathbb{Z}$; for $n\geqslant2$, $\overline{B}_{n}(x)=B_{n}(\{x\})$. Meanwhile, Berndt \cite[Theorem 5.1]{berndt3} used \eqref{eq1.7}, with $m=0$, to find his functional equation for Berndt's generalized $L$-function. In 2015, Da\u{g}l{\i} and Can \cite[Theorems 1 and 2]{dagli} used \eqref{eq1.7} to establish some reciprocity formulas for Dedekind character sums involving two primitive characters.

In fact, following the proof of \eqref{eq1.2} given by Cohen \cite[Theorem 9.2.2]{cohen}, we can obtain its alternating version, namely the Euler-Boole summation formula: Let $m\in\mathbb{N}_{0}$ and $a,b\in\mathbb{R}$ with $a<b$. Let $f(t)$ be an $m+1$ times continuously differentiable function on the interval $[a,b]$. Then
\begin{eqnarray}\label{eq1.8}
\sum_{\substack{a<k\leqslant b\\k\in\mathbb{Z}}}(-1)^{k}f(k)&=&\frac{1}{2}\sum_{k=0}^{m}\frac{(-1)^{k}}{k!}\bigl(\widetilde{E}_{k}(b)f^{(k)}(b)-\widetilde{E}_{k}(a)f^{(k)}(a)\bigl)\nonumber\\
&&+\frac{(-1)^{m+1}}{2m!}\int_{a}^{b}\widetilde{E}_{m}(\{t\})f^{(m+1)}(t)d t,
\end{eqnarray}
where $\widetilde{E}_{n}(x)=(-1)^{[x]}E_{n}(\{x\})$ are the periodic extensions of the Euler polynomials of degree $n$ (with $[x]$ the integer part of $x\in\mathbb{R}$ satisfying $\{x\}+[x]=x$), and the Euler polynomials of degree $n$ are defined by the generating function
\begin{equation}\label{eq1.9}
\frac{2e^{xt}}{e^{t}+1}=\sum_{n=0}^{\infty}E_{n}(x)\frac{t^{n}}{n!}\quad(|t|<\pi).
\end{equation}
It should be noted that Euler \cite{euler3} was the first to discuss \eqref{eq1.8} without the remainder terms, while the first to present \eqref{eq1.8} with the remainder terms was Boole \cite{boole}. In particular, Euler \cite{euler5} applied his summation formula to a nice function $f$,
\begin{equation}\label{eq1.10}
\sum_{k=0}^{\infty}(-1)^{k}f(k)=\frac{1}{2}f(0)+\sum_{k=1}^{\infty}\frac{(1-2^{2k})B_{2k}}{(2k)!}f^{(2k-1)}(0),
\end{equation}
in order to find the functional equations for two alternating series. Helpful summaries of Euler's mathematical ideas from this paper are found in Hardy \cite[Chapter 2]{hardy2} and Ayoub \cite{ayoub}. It is worth mentioning that Hu et al. \cite{hu}, in 2015, used the special case $a,b\in\mathbb{Z}$ in \eqref{eq1.8} to establish two reciprocity formulas for arithmetic sums similar to the generalized Dedekind sums considered by Apostol \cite{apostol1}. More recently, in 2017, Can and Da\u{g}l{\i} \cite[Theorem 1.3]{can} used \eqref{eq1.7} to derive the character analogue of \eqref{eq1.8} as follows:
\begin{eqnarray}\label{eq1.11}
\sum_{\substack{a<k< b\\k\in\mathbb{Z}}}(-1)^{k}\chi(k)f(k)&=&\frac{\chi(-1)}{2}\sum_{k=0}^{m}\frac{(-1)^{k}}{k!}\bigl(\widetilde{E}_{k}(b,\chi)f^{(k)}(b)-\widetilde{E}_{k}(a,\chi)f^{(k)}(a)\bigl)\nonumber\\
&&+\frac{(-1)^{m+1}\chi(-1)}{2m!}\int_{a}^{b}\widetilde{E}_{m}(t,\chi)f^{(m+1)}(t)d t,
\end{eqnarray}
where $q\in\mathbb{N}$ satisfies $q\geqslant2$ and $2\nmid q$, $m\in\mathbb{N}_{0}$, $a,b\in\mathbb{R}$ satisfy $a<b$, $\chi$ is a primitive Dirichlet character modulo $q$, $f(t)$ is $m+1$
times continuously differentiable on the interval $[a,b]$, and $\widetilde{E}_{n}(x,\chi)$ are given by
\begin{equation*}
\widetilde{E}_{n}(x,\chi)=q^{n}\sum_{k=1}^{q}(-1)^{k}\chi(k)\widetilde{E}_{n}\biggl(\frac{x+k}{q}\biggl).
\end{equation*}
As applications of \eqref{eq1.11}, Can and Da\u{g}l{\i} \cite[Theorems 1.5 and 1.6]{can} used it to establish some reciprocity formulas for arithmetic sums similar to the Hardy-Berndt sums considered by Hardy \cite{hardy1} and Berndt \cite{berndt4}.

Naturally, one may ask whether there exists a more general summation formula that unifies the above results \eqref{eq1.2}, \eqref{eq1.7}, \eqref{eq1.8} and \eqref{eq1.11}. We here give a positive answer. Building on the author's results \cite{he} on the generalized Apostol-Bernoulli polynomials and functions, we establish a summation formula associated with the generalized Apostol-Bernoulli functions (see Theorem \ref{thm2.1} below).

The remainder of this paper is organized as follows. In Section \ref{sec2}, we present our main result and show that it subsumes \eqref{eq1.2}, \eqref{eq1.7}, \eqref{eq1.8} and \eqref{eq1.11} as special cases; we also apply it to obtain the general power sum formula and the special values of Berndt's generalized $L$-function at non-positive integers. In Section \ref{sec3}, we give the detailed proof of Theorem \ref{thm2.1}.

\section{Main result and its applications}\label{sec2}

In what follows we define $\chi^{-}$ by $\chi^{-}(n)=\chi(-n)$ for a Dirichlet character $\chi$ modulo $q\in\mathbb{N}$ and $n\in\mathbb{Z}$. We also define, for $z\in\mathbb{C}$ and $\lambda\in\mathbb{C}\setminus\{0\}$, $\lambda^{z}=e^{z\log\lambda}$ as the general power function, with the principal branch of the logarithm satisfying
\begin{equation*}
\log \lambda=\log|\lambda|+\mathrm{i}\arg \lambda \quad( -\pi<\arg \lambda\leqslant\pi),
\end{equation*}
and let $\delta_{s,z}$ be the indicator function defined by $\delta_{s,z}=1$ if $s=z$, and $0$ otherwise, for $s,z\in\mathbb{C}$. We now state our main result as follows.

\begin{theorem}\label{thm2.1} Let $q\in\mathbb{N}$, $m\in\mathbb{N}_{0}$ and $a,b\in\mathbb{R}$ with $a<b$. Let $\chi$ be a Dirichlet character modulo $q$ and let $f(t)$ be an $m+1$
times continuously differentiable function on the interval $[a,b]$. Then, for $\lambda\in\mathbb{C}\setminus\{0\}$,
\begin{eqnarray}\label{eq2.1}
&&\sum_{\substack{a<k\leqslant b\\k\in\mathbb{Z}}}\chi(k)\lambda^{-k}f(k)\nonumber\\
&&\qquad=\sum_{k=0}^{m}\frac{(-1)^{k+1}}{(k+1)!}\bigl(\beta_{k+1,\chi^{-}}(\{b\}_{\chi},\lambda)f^{(k)}(b)-\beta_{k+1,\chi^{-}}(\{a\}_{\chi},\lambda)f^{(k)}(a)\bigl)\nonumber\\
&&\qquad\quad+\delta_{1,\lambda^{q}}\beta_{0,\chi^{-}}(\lambda)\int_{a}^{b}f(t)dt\nonumber\\
&&\qquad\quad+\frac{(-1)^{m}}{(m+1)!}\int_{a}^{b}\beta_{m+1,\chi^{-}}(\{t\}_{\chi},\lambda)f^{(m+1)}(t)dt,
\end{eqnarray}
where
\begin{equation*}
\beta_{0,\chi^{-}}(\lambda)=\frac{1}{q}\sum_{r=1}^{q}\chi(-r)\lambda^{r},
\end{equation*}
and
\begin{equation*}
\beta_{m,\chi^{-}}(\{x\}_{\chi},\lambda)=q^{m-1}\sum_{r=1}^{q}\chi(-r)\lambda^{r-q[\frac{x+r}{q}]}\beta_{m}\biggl(\biggl\{\frac{x+r}{q}\biggl\},\lambda^{q}\biggl),
\end{equation*}
in which $\beta_{n}(x,\lambda)$ denotes the Apostol-Bernoulli polynomials defined for $\lambda\in\mathbb{C}\setminus\{0\}$ by the generating function (see, e.g., \cite{apostol2})
\begin{equation}\label{eq2.2}
\frac{te^{xt}}{\lambda e^{t}-1}=\sum_{n=0}^{\infty}
\beta_{n}(x,\lambda)\frac{t^{n}}{n!}\quad(|t+\log\lambda|<2\pi).
\end{equation}
\end{theorem}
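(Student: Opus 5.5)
We prove Theorem \ref{thm2.1} by the method behind \eqref{eq1.2}: integrate by parts repeatedly against a periodic-type kernel whose jumps reproduce the weights $\chi(k)\lambda^{-k}$. Write
\[
P_n(x)=\frac{1}{n!}\beta_{n,\chi^{-}}(\{x\}_{\chi},\lambda) \qquad (n\geqslant 1).
\]
We first establish three structural properties of these functions.

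\emph{(i) Differentiation.} On each open interval between consecutive integers,
\[
\frac{d}{dx}\beta_{n,\chi^{-}}(\{x\}_{\chi},\lambda)=n\,\beta_{n-1,\chi^{-}}(\{x\}_{\chi},\lambda)\quad(n\geqslant2),
\]
while for $n=1$ the derivative is the constant $\beta_{0,\chi^{-}}$ term. This follows from $\partial_x\beta_n(x,\lambda)=n\beta_{n-1}(x,\lambda)$, which is read off from \eqref{eq2.2}, combined with the chain rule through $\{(x+r)/q\}$. The factor $q^{-1}$ coming from the chain rule is absorbed by the $q^{n-1}$ normalization. For $n=1$, note that $\beta_0(x,\lambda^q)=0$ if $\lambda^q\neq1$ and $=1$ if $\lambda^q=1$. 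Hence $\frac{d}{dx}\beta_{1,\chi^{-}}=\delta_{1,\lambda^q}\,q^{-1}\sum_r\chi(-r)\lambda^{r-q[(x+r)/q]}$. When $\lambda^q=1$ the power of $\lambda$ reduces to $\lambda^r$, so this derivative equals $\delta_{1,\lambda^q}\beta_{0,\chi^{-}}(\lambda)$.

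\emph{(ii) Continuity for $n\geqslant 2$.} When $x$ crosses an integer, some $(x+r)/q$ crosses an integer $j$. The prefactor $\lambda^{r-qj}$ then gets multiplied by $\lambda^{-q}$, and $\beta_n(\{\cdot\},\lambda^q)$ passes from the value at $1$ to the value at $0$. The identity $\lambda\beta_n(x+1,\lambda)-\beta_n(x,\lambda)=nx^{n-1}$, obtained from \eqref{eq2.2}, applied with $\lambda^q$ and $x=0$ gives $\lambda^q\beta_n(1,\lambda^q)=\beta_n(0,\lambda^q)$ for $n\geqslant2$. So the two one-sided values agree.

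\emph{(iii) Jump of $P_1$.} For $n=1$ the same identity gives a defect of $1$. At $x=k\in\mathbb{Z}$ the relevant term is the one with $r\equiv -k\pmod q$, and it carries the factor $\chi(-r)\lambda^{r-q[(k+r)/q]}=\chi(k)\lambda^{-k}$. We then fix the sign convention: the jump of $\beta_{1,\chi^{-}}(\{x\}_\chi,\lambda)$ at $k$ should be $-\chi(k)\lambda^{-k}$, with right-continuity dictated by the floor and fractional-part conventions. This matches the model case $B_1(\{x\})$, which jumps by $-1$.

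With these properties the argument is bookkeeping. For $m=0$, integrate $f'(t)P_1(t)$ over $[a,b]$, splitting at the integers in $(a,b]$. Each subinterval contributes boundary terms. Summing them gives $P_1(b)f(b)-P_1(a)f(a)$ plus the jump contributions $\sum\chi(k)\lambda^{-k}f(k)$, minus $\int f\,P_1'$, which equals $\delta_{1,\lambda^q}\beta_{0,\chi^{-}}(\lambda)\int_a^b f$. Rearranging yields \eqref{eq2.1} for $m=0$. We then induct on $m$. Integrating $\int P_{m+1}f^{(m+1)}$ by parts uses $P_{m+2}'=P_{m+1}$ and the continuity of $P_{m+2}$, so no jump terms appear. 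This produces the $k=m+1$ boundary term with the sign $(-1)^{k+1}/(k+1)!$ and the new remainder with sign $(-1)^{m+1}$, exactly as in Cohen's proof of \eqref{eq1.2}.

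The main obstacle is (iii) together with the right-endpoint conventions. We must check that at integer $b$ the term $k=b$ is included, matching $a<k\leqslant b$, and that at integer $a$ the term is excluded. This depends on $\{x\}_\chi$ being evaluated via the floor function, so that $P_1$ is right-continuous. We must also check that the coefficient carried by the single crossing summand is exactly $\chi(k)\lambda^{-k}$ even when $\chi(k)=0$, in which case there is no jump, consistent with the left side. We will verify this with the explicit formula $\beta_1(x,\lambda)$, treating the cases $\lambda^q=1$ and $\lambda^q\neq1$ separately. When $\lambda^q=1$ there is an extra $-1$ in the identity above, and it cancels against the linear drift coming from $\beta_{0,\chi^-}$.
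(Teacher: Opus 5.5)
Your proposal is correct. It shares the paper's skeleton: repeated integration by parts down to the first-order remainder, then evaluation of that remainder. The key lemmas, however, are obtained differently.

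\emph{The paper's route.} It gets the derivative relation \eqref{eq3.3} by differentiating the Fourier expansion \eqref{eq2.20} from the author's earlier work. It then handles $R_1$ through the closed form \eqref{eq2.26}, which writes $\beta_{1,\chi^{-}}(\{t\}_{\chi},\lambda)$ as an affine (or constant) function minus the step function $\sum_{0\leqslant r\leqslant t}\chi(r)\lambda^{-r}$, and integrates that step function against $f'$ explicitly. That formula is only valid for $t\geqslant0$. So the paper first proves the case $a\geqslant0$ and then reaches general $a$ by shifting $a,b$ by a multiple of $q$, using $\beta_{n,\chi^{-}}(\{x+nq\}_{\chi},\lambda)=\lambda^{-nq}\beta_{n,\chi^{-}}(\{x\}_{\chi},\lambda)$.

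\emph{Your route.} You derive everything locally from two identities read off from \eqref{eq2.2}: $\partial_x\beta_n(x,\lambda)=n\beta_{n-1}(x,\lambda)$ and $\lambda\beta_n(x+1,\lambda)-\beta_n(x,\lambda)=nx^{n-1}$. At an integer $k$ only the summand with $r\equiv -k$ modulo $q$ changes branch. Its coefficient $\chi(-r)\lambda^{r-qj}$, with $qj=k+r$, is exactly $\chi(k)\lambda^{-k}$.

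\emph{What each buys.} Your argument is self-contained and avoids term-by-term differentiation of a slowly convergent Fourier series. It also treats every real $a$ at once, with no shift step, because right-continuity of the first-order kernel handles the endpoint conventions directly. The paper's route instead reuses existing machinery and gives an explicit global formula for the first-order kernel.

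\emph{One correction.} The identity $\lambda\beta_1(x+1,\lambda)-\beta_1(x,\lambda)=1$ holds for every $\lambda$. For $\lambda=1$ one has $\beta_1(x,1)=x-\frac12$, and otherwise $\beta_1(x,\lambda)=1/(\lambda-1)$. Hence the jump of $\beta_{1,\chi^{-}}(\{x\}_{\chi},\lambda)$ at $k$ is $-\chi(k)\lambda^{-k}$ uniformly; it is computed, not a sign convention you fix. There is also no extra $-1$ when $\lambda^{q}=1$ that must cancel against the drift. The drift enters only through $P_1'=\delta_{1,\lambda^{q}}\beta_{0,\chi^{-}}(\lambda)$ on the open subintervals, which yields the $\int_a^b f(t)\,dt$ term, exactly as you already use it in the $m=0$ step.
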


It follows that we show some special cases of Theorem \ref{thm2.1}. We first give the following result.

\begin{corollary}\label{cor2.2} Let $q\in\mathbb{N}$, $m\in\mathbb{N}_{0}$ and $a,b\in\mathbb{R}$ with $a<b$. Let $\chi$ be a Dirichlet character modulo $q$ and let $f(t)$ be an $m+1$
times continuously differentiable function on the interval $[a,b]$. Then
\begin{eqnarray}\label{eq2.3}
&&\sum_{\substack{a<k\leqslant b\\k\in\mathbb{Z}}}\chi(k)f(k)\nonumber\\
&&\qquad=\sum_{k=0}^{m}\frac{(-1)^{k+1}}{(k+1)!}\bigl(B_{k+1}(\{b\}_{\chi},\chi^{-})f^{(k)}(b)-B_{k+1}(\{a\}_{\chi},\chi^{-})f^{(k)}(a)\bigl)\nonumber\\
&&\qquad\quad+B_{0}(\chi^{-})\int_{a}^{b}f(t)dt\nonumber\\
&&\qquad\quad+\frac{(-1)^{m}}{(m+1)!}\int_{a}^{b}B_{m+1}(\{t\}_{\chi},\chi^{-})f^{(m+1)}(t)dt,
\end{eqnarray}
where
\begin{equation*}
B_{0}(\chi^{-})=\frac{1}{q}\sum_{r=1}^{q}\chi(-r),
\end{equation*}
and
\begin{equation*}
B_{m}(\{x\}_{\chi},\chi^{-})=q^{m-1}\sum_{r=1}^{q}\chi(-r)B_{m}\biggl(\biggl\{\frac{x+r}{q}\biggl\}\biggl).
\end{equation*}
\end{corollary}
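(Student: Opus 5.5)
Corollary \ref{cor2.2} follows from Theorem \ref{thm2.1} by specializing to $\lambda=1$. We check that each ingredient of \eqref{eq2.1} reduces to the corresponding term of \eqref{eq2.3}.

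First, $\lambda=1$ gives $\lambda^{-k}=1$, so the left side of \eqref{eq2.1} becomes $\sum_{a<k\leqslant b}\chi(k)f(k)$. Since $\lambda^{q}=1$, the indicator $\delta_{1,\lambda^{q}}$ equals $1$, and
\[
\beta_{0,\chi^{-}}(1)=\frac{1}{q}\sum_{r=1}^{q}\chi(-r)=B_{0}(\chi^{-}).
\]
Hence the middle term of \eqref{eq2.1} is exactly $B_{0}(\chi^{-})\int_a^b f(t)\,dt$. Note that $\log 1=0$ on the principal branch, so every power $\lambda^{z}$ equals $1$ unambiguously.

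Next we identify the Apostol--Bernoulli polynomials at $\lambda=1$. Setting $\lambda=1$ in \eqref{eq2.2} gives the generating function $te^{xt}/(e^{t}-1)$, valid for $|t|<2\pi$, which is exactly \eqref{eq1.3}. By uniqueness of Taylor coefficients, $\beta_n(x,1)=B_n(x)$ for all $n$. Applying this with $\lambda^q=1$, each summand of
\[
\beta_{m,\chi^{-}}(\{x\}_{\chi},1)=q^{m-1}\sum_{r=1}^{q}\chi(-r)\,1^{\,r-q[\frac{x+r}{q}]}\,\beta_m\Bigl(\Bigl\{\tfrac{x+r}{q}\Bigr\},1\Bigr)
\]
reduces to $\chi(-r)B_m(\{(x+r)/q\})$. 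So $\beta_{m,\chi^{-}}(\{x\}_{\chi},1)=B_m(\{x\}_{\chi},\chi^{-})$ for every $m$ and every real $x$.

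Substituting these identifications into the boundary sum and the remainder integral of \eqref{eq2.1}, with $m$ replaced by $k+1$ and by $m+1$, yields \eqref{eq2.3} term by term.

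There is no real obstacle here. The only points needing care are that $\delta_{1,\lambda^{q}}=1$ at $\lambda=1$, so the integral term survives, and that the factor $\lambda^{r-q[\cdot]}$ is identically $1$. Both are immediate from the stated conventions for $\lambda^{z}$.
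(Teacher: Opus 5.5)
Your proposal is correct and follows the paper's own proof, which simply sets $\lambda=1$ in Theorem \ref{thm2.1}. Your term-by-term checks, namely $\beta_n(x,1)=B_n(x)$, $\delta_{1,\lambda^{q}}=1$ and $1^{\,r-q[\frac{x+r}{q}]}=1$, spell out what the paper leaves implicit.
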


\begin{proof}
Setting $\lambda=1$ in Theorem \ref{thm2.1} gives the desired result.
\end{proof}

Clearly, the case $q=1$ in Corollary \ref{cor2.2} gives \eqref{eq1.2}. If we take $q\geqslant2$ and let $\chi$ be a primitive Dirichlet character modulo $q$, then by Corollary \ref{cor2.2} we recover \eqref{eq1.7}. We here mention that an equivalent version of Corollary \ref{cor2.2} appears in \cite[Proposition 9.4.15]{cohen}.

We also have the following alternating versions of Corollary \ref{cor2.2}.

\begin{corollary}\label{cor2.3} Let $q\in\mathbb{N}$, $m\in\mathbb{N}_{0}$ and $a,b\in\mathbb{R}$ with $2\nmid q$ and $a<b$. Let $\chi$ be a Dirichlet character modulo $q$ and let $f(t)$ be an $m+1$
times continuously differentiable function on the interval $[a,b]$. Then
\begin{eqnarray}\label{eq2.4}
&&\sum_{\substack{a<k\leqslant b\\k\in\mathbb{Z}}}(-1)^{k}\chi(k)f(k)\nonumber\\
&&\qquad=\frac{1}{2}\sum_{k=0}^{m}\frac{(-1)^{k}}{k!}\bigl(E_{k}(\{b\}_{\chi},\chi^{-})f^{(k)}(b)-E_{k}(\{a\}_{\chi},\chi^{-})f^{(k)}(a)\bigl)\nonumber\\
&&\qquad\quad+\frac{(-1)^{m+1}}{2m!}\int_{a}^{b}E_{m}(\{t\}_{\chi},\chi^{-})f^{(m+1)}(t)dt,
\end{eqnarray}
where
\begin{equation*}
E_{m}(\{x\}_{\chi},\chi^{-})=q^{m}\sum_{r=1}^{q}(-1)^{r-[\frac{x+r}{q}]}\chi(-r)E_{m}\biggl(\biggl\{\frac{x+r}{q}\biggl\}\biggl).
\end{equation*}
\end{corollary}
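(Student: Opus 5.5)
The plan is to specialize Theorem \ref{thm2.1} to $\lambda=-1$ and identify the Apostol-Bernoulli polynomials at $\lambda^{q}=-1$ with Euler polynomials. With the principal branch, $\log(-1)=\mathrm{i}\pi$. Hence $\lambda^{z}=e^{\mathrm{i}\pi z}$, and for every integer $n$ we have $(-1)^{n}$ in the usual sense. In particular $\lambda^{-k}=(-1)^{k}$, so the left side of \eqref{eq2.1} becomes the left side of \eqref{eq2.4}.

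Since $q$ is odd, $\lambda^{q}=-1\neq1$, so $\delta_{1,\lambda^{q}}=0$ and the integral term $\beta_{0,\chi^{-}}(\lambda)\int_a^b f$ drops out. This is consistent with \eqref{eq2.4}, which has no such term.

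The key identity comes from comparing generating functions. By \eqref{eq2.2} and \eqref{eq1.9},
\begin{equation*}
\frac{te^{xt}}{-e^{t}-1}=-\frac{t}{2}\cdot\frac{2e^{xt}}{e^{t}+1}=-\sum_{n=0}^{\infty}\frac{t^{n+1}}{2}E_{n}(x)\frac{1}{n!},
\end{equation*}
valid for $|t|<\pi$, which lies inside $|t+\mathrm{i}\pi|<2\pi$. Comparing coefficients of $t^{n}/n!$ gives
\begin{equation*}
\beta_{n}(x,-1)=-\frac{n}{2}E_{n-1}(x)\quad(n\geqslant1).
\end{equation*}
Next, the twisting factor is $\lambda^{r-q[\frac{x+r}{q}]}=(-1)^{r-q[\frac{x+r}{q}]}=(-1)^{r-[\frac{x+r}{q}]}$, because $q$ is odd. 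Therefore
\begin{equation*}
\beta_{n+1,\chi^{-}}(\{x\}_{\chi},-1)=q^{n}\sum_{r=1}^{q}\chi(-r)(-1)^{r-[\frac{x+r}{q}]}\Bigl(-\frac{n+1}{2}\Bigr)E_{n}\Bigl(\Bigl\{\frac{x+r}{q}\Bigr\}\Bigr)=-\frac{n+1}{2}E_{n}(\{x\}_{\chi},\chi^{-}).
\end{equation*}

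Finally, substitute this into \eqref{eq2.1}. In the boundary sum the coefficient becomes
\begin{equation*}
\frac{(-1)^{k+1}}{(k+1)!}\cdot\Bigl(-\frac{k+1}{2}\Bigr)=\frac{(-1)^{k}}{2\,k!}.
\end{equation*}
In the remainder term it becomes
\begin{equation*}
\frac{(-1)^{m}}{(m+1)!}\cdot\Bigl(-\frac{m+1}{2}\Bigr)=\frac{(-1)^{m+1}}{2\,m!}.
\end{equation*}
This is exactly \eqref{eq2.4}.

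No real obstacle is expected. The only points needing care are the branch conventions for $\lambda^{z}$ at $\lambda=-1$ and the parity reduction $(-1)^{q[\cdot]}=(-1)^{[\cdot]}$, which uses $2\nmid q$.
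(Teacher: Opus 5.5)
Your proposal is correct and follows the paper's own proof. Like the paper, you set $\lambda=-1$ in Theorem \ref{thm2.1} and use the generating-function identity \eqref{eq2.5}, $E_{n}(x)=-2\beta_{n+1}(x,-1)/(n+1)$; you also make explicit two points the paper leaves implicit, namely that the $\delta_{1,\lambda^{q}}$ term vanishes and that $(-1)^{q[\frac{x+r}{q}]}=(-1)^{[\frac{x+r}{q}]}$, both of which use that $q$ is odd.
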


\begin{proof}
From \eqref{eq1.9} and \eqref{eq2.2}, it follows that for $n\in\mathbb{N}_{0}$,
\begin{equation}\label{eq2.5}
E_{n}(x)=-\frac{2\beta_{n+1}(x,-1)}{n+1}.
\end{equation}
Thus, by setting $\lambda=-1$ in Theorem \ref{thm2.1} and in light of \eqref{eq2.5}, the desired result follows.
\end{proof}

\begin{corollary}\label{cor2.4} Let $q\in\mathbb{N}$, $m\in\mathbb{N}_{0}$ and $a,b\in\mathbb{R}$ with $2\mid q$ and $a<b$. Let $\chi$ be a Dirichlet character modulo $q$ and let $f(t)$ be an $m+1$
times continuously differentiable function on the interval $[a,b]$. Then
\begin{eqnarray}\label{eq2.6}
&&\sum_{\substack{a<k\leqslant b\\k\in\mathbb{Z}}}(-1)^{k}\chi(k)f(k)\nonumber\\
&&\qquad=\sum_{k=0}^{m}\frac{(-1)^{k+1}}{(k+1)!}\bigl(\widehat{B}_{k+1}(\{b\}_{\chi},\chi^{-})f^{(k)}(b)-\widehat{B}_{k+1,\chi^{-}}(\{a\}_{\chi},\chi^{-})f^{(k)}(a)\bigl)\nonumber\\
&&\qquad\quad+\widehat{B}_{0}(\chi^{-})\int_{a}^{b}f(t)dt\nonumber\\
&&\qquad\quad+\frac{(-1)^{m}}{(m+1)!}\int_{a}^{b}\widehat{B}_{m+1}(\{t\}_{\chi},\chi^{-})f^{(m+1)}(t)dt,
\end{eqnarray}
where
\begin{equation*}
\widehat{B}_{0}(\chi^{-})=\frac{1}{q}\sum_{r=1}^{q}(-1)^{r}\chi(-r),
\end{equation*}
and
\begin{equation*}
\widehat{B}_{m}(\{x\}_{\chi},\chi^{-})=q^{m-1}\sum_{r=1}^{q}(-1)^{r}\chi(-r)B_{m}\biggl(\biggl\{\frac{x+r}{q}\biggl\}\biggl).
\end{equation*}
\end{corollary}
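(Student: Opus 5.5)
We obtain Corollary \ref{cor2.4} by specializing Theorem \ref{thm2.1} to $\lambda=-1$. Since $2\mid q$, we have $\lambda^{q}=(-1)^{q}=1$. Thus $\delta_{1,\lambda^{q}}=1$ and the integral term $\int_a^b f(t)\,dt$ survives, in contrast to Corollary \ref{cor2.3}. The argument reduces to three identifications.

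First, the left-hand side. For integer $k$ we have $\lambda^{-k}=(-1)^{-k}=e^{-k\log(-1)}=e^{-\mathrm{i}\pi k}=(-1)^{k}$ under the principal branch. So the left side of \eqref{eq2.1} becomes $\sum_{a<k\leqslant b}(-1)^{k}\chi(k)f(k)$.

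Second, the constant term. With $\lambda=-1$ we get $\lambda^{r}=(-1)^{r}$, so
$\beta_{0,\chi^{-}}(-1)=\frac{1}{q}\sum_{r=1}^{q}(-1)^{r}\chi(-r)=\widehat{B}_{0}(\chi^{-})$.

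Third, the kernel $\beta_{m,\chi^{-}}(\{x\}_{\chi},-1)$. Its factor $\lambda^{r-q[\frac{x+r}{q}]}=(-1)^{r-q[\frac{x+r}{q}]}$ equals $(-1)^{r}$, because $q$ is even and so $q[\frac{x+r}{q}]$ is an even integer. Also $\lambda^{q}=1$. Comparing \eqref{eq2.2} at $\lambda=1$ with \eqref{eq1.3} gives $\beta_{n}(x,1)=B_{n}(x)$ for all $n$. Hence
\begin{equation*}
\beta_{m,\chi^{-}}(\{x\}_{\chi},-1)=q^{m-1}\sum_{r=1}^{q}(-1)^{r}\chi(-r)B_{m}\biggl(\biggl\{\frac{x+r}{q}\biggr\}\biggr)=\widehat{B}_{m}(\{x\}_{\chi},\chi^{-}).
\end{equation*}
We apply this for $m=k+1$ at $x=a$ and $x=b$, and for $m+1$ at $x=t$ inside the integral.

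Substituting these three identifications into \eqref{eq2.1} gives \eqref{eq2.6}. The symbol $\widehat{B}_{k+1,\chi^{-}}(\{a\}_{\chi},\chi^{-})$ in the statement is read as $\widehat{B}_{k+1}(\{a\}_{\chi},\chi^{-})$.

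The only point needing care is the exponent parity and branch convention. Evaluating $\lambda^{z}$ with the principal logarithm for non-integer-looking exponents could introduce phases, but all exponents here are integers, so $(-1)^{n}$ is unambiguous. The fact that $q[\cdot]$ is even is exactly where the hypothesis $2\mid q$ enters.
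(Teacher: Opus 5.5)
Your proposal is correct and follows the paper's proof, which simply sets $\lambda=-1$ in Theorem \ref{thm2.1}. Your checks that $\lambda^{q}=1$ (so $\delta_{1,\lambda^{q}}=1$ and $\beta_{m}(\cdot,1)=B_{m}$) and that $(-1)^{r-q[\frac{x+r}{q}]}=(-1)^{r}$ for even $q$ are exactly the details the paper leaves implicit. You are also right to read the stray subscript in $\widehat{B}_{k+1,\chi^{-}}(\{a\}_{\chi},\chi^{-})$ as a typo.
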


\begin{proof}
Setting $\lambda=-1$ in Theorem \ref{thm2.1} gives the desired result.
\end{proof}

It is easily seen that Corollary \ref{cor2.3} includes all the information of \eqref{eq1.8} and \eqref{eq1.11}, and Corollary \ref{cor2.4} is a natural complement to Corollary \ref{cor2.3}.

The following result is a more general statement than the power
sum formula \eqref{eq1.4}.

\begin{theorem}\label{thm2.5} Let $n,q\in\mathbb{N}$ and $m\in\mathbb{N}_{0}$. Let $\chi$ be a Dirichlet character modulo $q$. Then, for $\lambda\in\mathbb{C}\setminus\{0\}$,
\begin{eqnarray}\label{eq2.7}
\sum_{k=1}^{nq}\chi(k)\lambda^{-k}k^{m}&=&\frac{\lambda^{-qn}}{m+1}\sum_{k=0}^{m}\binom{m+1}{k}(-1)^{k}\beta_{k,\chi^{-}}(\{0\}_{\chi},\lambda)(nq)^{m+1-k}\nonumber\\
&&+\frac{(-1)^{m+1}\beta_{m+1,\chi^{-}}(\{0\}_{\chi},\lambda)(\lambda^{-nq}-1)}{m+1}.
\end{eqnarray}
\end{theorem}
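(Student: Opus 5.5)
We apply Theorem \ref{thm2.1} with $f(t)=t^{m}$, $a=0$, $b=nq$. The sum over $0<k\leqslant nq$ is then exactly the left side of \eqref{eq2.7}. The remainder integral vanishes because $f^{(m+1)}\equiv0$. Also $f^{(k)}(nq)=\frac{m!}{(m-k)!}(nq)^{m-k}$, while $f^{(k)}(0)=0$ for $k<m$ and $f^{(m)}(0)=m!$.

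The first key step is a quasi-periodicity relation for the generalized functions at the endpoint $b=nq$. From the defining formula, for $x=nq$ we have $\{\frac{nq+r}{q}\}=\{\frac{r}{q}\}$ and $[\frac{nq+r}{q}]=n+[\frac{r}{q}]$. Hence
\begin{equation*}
\beta_{j,\chi^{-}}(\{nq\}_{\chi},\lambda)=\lambda^{-nq}\beta_{j,\chi^{-}}(\{0\}_{\chi},\lambda)\qquad(j\in\mathbb{N}_{0}).
\end{equation*}
Here I read $\beta_{j,\chi^{-}}(\{x\}_{\chi},\lambda)$ as the displayed formula evaluated at $x$; I expect this to be consistent with the paper's notation $\{\cdot\}_{\chi}$, and this is the point I would check first.

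The second step is to absorb the middle term $\delta_{1,\lambda^{q}}\beta_{0,\chi^{-}}(\lambda)\int_{0}^{nq}t^{m}dt$ into the sum as its missing $k=0$ term. Setting $t=0$ in \eqref{eq2.2} gives $\beta_{0}(x,\lambda^{q})=\delta_{1,\lambda^{q}}$. Therefore
\begin{equation*}
\beta_{0,\chi^{-}}(\{0\}_{\chi},\lambda)=\frac{\delta_{1,\lambda^{q}}}{q}\sum_{r=1}^{q}\chi(-r)\lambda^{r-q[r/q]}.
\end{equation*}
When $\lambda^{q}=1$ the factor $\lambda^{-q[r/q]}$ equals $1$, so this coincides with $\delta_{1,\lambda^{q}}\beta_{0,\chi^{-}}(\lambda)$. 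In the same case $\lambda^{-nq}=1$. Hence the middle term equals $\frac{\lambda^{-nq}}{m+1}\beta_{0,\chi^{-}}(\{0\}_{\chi},\lambda)(nq)^{m+1}$ in all cases, and this is precisely the $k=0$ term of \eqref{eq2.7}. This bookkeeping with $\delta_{1,\lambda^{q}}$ and the branch factors is the step I expect to need the most care, though it is elementary.

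Finally, we reindex the boundary sum with $j=k+1$, using $\frac{1}{(k+1)!}\frac{m!}{(m-k)!}=\frac{1}{m+1}\binom{m+1}{j}$. The $b$-contributions become $\frac{\lambda^{-nq}}{m+1}\sum_{j=1}^{m+1}\binom{m+1}{j}(-1)^{j}\beta_{j,\chi^{-}}(\{0\}_{\chi},\lambda)(nq)^{m+1-j}$. The only $a$-contribution is the $k=m$ term, namely $-\frac{(-1)^{m+1}}{m+1}\beta_{m+1,\chi^{-}}(\{0\}_{\chi},\lambda)$. The $j=1,\dots,m$ terms together with the $k=0$ term from the second step give the first line of \eqref{eq2.7}. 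The $j=m+1$ term combines with the $a$-contribution to give $\frac{(-1)^{m+1}}{m+1}\beta_{m+1,\chi^{-}}(\{0\}_{\chi},\lambda)(\lambda^{-nq}-1)$, which completes the proof.
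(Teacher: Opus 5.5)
Your proposal is correct and follows the paper's proof essentially step for step. You apply Theorem \ref{thm2.1} with $f(t)=t^{m}$, $a=0$, $b=nq$, use the quasi-periodicity $\beta_{j,\chi^{-}}(\{nq\}_{\chi},\lambda)=\lambda^{-nq}\beta_{j,\chi^{-}}(\{0\}_{\chi},\lambda)$ to reach \eqref{eq2.8}, and then use $\beta_{0}(x,\lambda^{q})=\delta_{1,\lambda^{q}}$ (i.e.\ \eqref{eq2.10}) to absorb the $\delta_{1,\lambda^{q}}\beta_{0,\chi^{-}}(\lambda)$ term as the $k=0$ summand; your check that $\lambda^{-nq}=1$ and $\beta_{0,\chi^{-}}(\{0\}_{\chi},\lambda)=\beta_{0,\chi^{-}}(\lambda)$ when $\lambda^{q}=1$ spells out the step the paper leaves implicit.
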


\begin{proof}
Since
\begin{equation*}
\beta_{m,\chi^{-}}(\{nq\}_{\chi},\lambda)=\lambda^{-nq}\beta_{m,\chi^{-}}(\{0\}_{\chi},\lambda),
\end{equation*}
by applying Theorem \ref{thm2.1} with $f(t)=t^{m}$, $a=0$ and $b=nq$, we have
\begin{eqnarray}\label{eq2.8}
\sum_{k=1}^{nq}\chi(k)\lambda^{-k}k^{m}&=&\frac{\lambda^{-nq}}{m+1}\sum_{k=1}^{m}\binom{m+1}{k}(-1)^{k}\beta_{k,\chi^{-}}(\{0\}_{\chi},\lambda)(nq)^{m+1-k}\nonumber\\
&&+\frac{(-1)^{m+1}\beta_{m+1,\chi^{-}}(\{0\}_{\chi},\lambda)(\lambda^{-nq}-1)}{m+1}\nonumber\\
&&+\frac{\delta_{1,\lambda^{q}}\beta_{0,\chi^{-}}(\lambda)(nq)^{m+1}}{m+1}.
\end{eqnarray}
Note that by \eqref{eq2.2} we have
\begin{equation}\label{eq2.9}
(\lambda e^{t}-1)\sum_{n=0}^{\infty}\beta_{n}(x,\lambda)\frac{t^{n}}{n!}=te^{xt}.
\end{equation}
Hence, applying the Taylor series of $e^{t}$ to both sides of \eqref{eq2.9}, we find that
\begin{equation}\label{eq2.10}
\beta_{0}(x,\lambda)=\begin{cases}
1,  &\lambda=1,\\
0,  &\lambda\not=1,
\end{cases}
\end{equation}
and
\begin{equation}\label{eq2.11}
\beta_{1}(x,\lambda)=\begin{cases}
x-\frac{1}{2},  &\lambda=1,\\
\frac{1}{\lambda-1},  &\lambda\not=1.
\end{cases}
\end{equation}
Thus, by applying \eqref{eq2.10} to the right-hand side of \eqref{eq2.8}, we prove Theorem \ref{thm2.5}.
\end{proof}

Obviously, the case $q=\lambda=1$ in Theorem \ref{thm2.5} immediately gives \eqref{eq1.4}. If we take $q=1$ and $\lambda=-1$ in Theorem \ref{thm2.5}, then in view of \eqref{eq2.5} and \eqref{eq2.10} we obtain the alternating power sum formula:
\begin{equation}\label{eq2.12}
\sum_{k=1}^{n}(-1)^{k}k^{m}=\frac{(-1)^{n}}{2}\sum_{k=0}^{m}\binom{m}{k}(-1)^{k}E_{k}(0)n^{m-k}+\frac{(-1)^{m+1}E_{m}(0)}{2},
\end{equation}
or equivalently,
\begin{eqnarray}\label{eq2.13}
\sum_{k=1}^{n}(-1)^{k}k^{m}&=&(-1)^{n}\sum_{k=0}^{m}\binom{m}{k}(-1)^{k}\frac{(1-2^{k+1})B_{k+1}n^{m-k}}{k+1}\nonumber\\
&&+\frac{(-1)^{m+1}(1-2^{m+1})B_{m+1}}{m+1},
\end{eqnarray}
where $n\in\mathbb{N}$ and $m\in\mathbb{N}_{0}$. It should be noted that the first few values  of formula \eqref{eq2.13} also appears in Euler's book \cite[pp. 499--501]{euler4}.

We now turn our attention to Berndt's generalized $L$-function, which is defined by the series (see, e.g., \cite[p. 430]{berndt3})
\begin{equation}\label{eq2.14}
L(s,x,a,\chi)=\sideset{}{'}\sum_{n=0}^{\infty}\frac{\chi(n)e^{\frac{2\pi \mathrm{i}nx}{q}}}{(n+a)^{s}},
\end{equation}
where $x\in\mathbb{R}$, $a\in\mathbb{C}$, $\chi$ is a Dirichlet character $\chi$ modulo $q\in\mathbb{N}$, the dash indicates that the term corresponding to $n=-a$ is omitted if $a\in\{0,-1,-2,\ldots\}$, $\Re(s)>1$ if $x\in\mathbb{Z}$ and $\gcd(x,q)=1$, and $\Re(s)>0$ otherwise. It is evident that \eqref{eq2.14} includes Dirichlet $L$-function, Riemann zeta-function, Hurwitz-zeta function, alternating Hurwitz-zeta function, and Lerch-zeta function as special cases.

We next apply Theorem \ref{thm2.1} to obtain the special values of Berndt's generalized $L$-function at non-positive integers.

\begin{theorem}\label{thm2.6} Let $n,q\in\mathbb{N}$ and $x,a\in\mathbb{R}$ with $a>0$. Let $\chi$ be a Dirichlet character modulo $q$. If $q=1$, then
\begin{equation}\label{eq2.15}
L(1-n,x,a,\chi)=-\frac{\beta_{n}(a,e^{\frac{2\pi \mathrm{i}x}{q}})}{n},
\end{equation}
and if $q\geqslant2$, then
\begin{equation}\label{eq2.16}
L(1-n,x,a,\chi)=-\frac{\beta_{n,\chi}(a,e^{\frac{2\pi \mathrm{i}x}{q}})}{n},
\end{equation}
where $\beta_{n,\chi}(x,\lambda)$ are the generalized Apostol-Bernoulli polynomials, defined for $\lambda\in\mathbb{C}\setminus\{0\}$ and for a Dirichlet character $\chi$ modulo $q\in\mathbb{N}$, by the generating function (see, e.g., \cite[Definition 2.1]{he})
\begin{equation}\label{eq2.17}
\sum_{r=1}^{q}\frac{\chi(r)\lambda^{r}te^{(r+x)t}}{\lambda^{q}e^{qt}-1}=\sum_{n=0}^{\infty}\beta_{n,\chi}(x,\lambda)\frac{t^{n}}{n!}\quad(|t+\log\lambda|<2\pi/q).
\end{equation}
\end{theorem}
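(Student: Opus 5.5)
We will compute $L(1-n,x,a,\chi)$ by analytic continuation, using Theorem \ref{thm2.1} with $m=n$ (or larger) to produce an expression valid for $\Re(s)>1-m-1$. Put $\lambda=e^{-2\pi \mathrm{i}x/q}$, so that $\lambda^{-k}=e^{2\pi\mathrm{i}kx/q}$. Since $a>0$ no term is omitted. Apply Theorem \ref{thm2.1} to $f(t)=(t+a)^{-s}$ on $[0,N]$ (with $\chi$ in place of $\chi$), which gives $\sum_{0<k\le N}\chi(k)\lambda^{-k}(k+a)^{-s}$. Then add the $k=0$ term $\chi(0)a^{-s}$ and let $N\to\infty$. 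The $f^{(k)}(N)$ boundary terms tend to $0$ for $\Re(s)$ large, because $\beta_{k+1,\chi^-}(\{N\}_\chi,\lambda)$ is bounded; when $|\lambda|=1$ it is quasi-periodic with unimodular factor. The term $\delta_{1,\lambda^q}\beta_{0,\chi^-}(\lambda)\int_0^N f$ needs care. If $\lambda^q=1$ and $\beta_{0,\chi^-}(\lambda)\ne0$, the integral gives $a^{1-s}/(s-1)$ plus a vanishing tail. This is exactly the pole case (the series needs $\Re s>1$), and the term is harmless at $s=1-n$ as a polynomial value.

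The result is a representation
\[
L(s,x,a,\chi)=\chi(0)a^{-s}+\sum_{k=0}^{m}\frac{(-1)^{k}}{(k+1)!}\beta_{k+1,\chi^-}(\{0\}_\chi,\lambda)f^{(k)}(0)+\delta_{1,\lambda^q}\beta_{0,\chi^-}(\lambda)\frac{a^{1-s}}{s-1}+\frac{(-1)^m}{(m+1)!}\int_0^\infty\beta_{m+1,\chi^-}(\{t\}_\chi,\lambda)f^{(m+1)}(t)\,dt .
\]
The integral converges for $\Re(s)>-m$, and everything is analytic there apart from the possible pole at $s=1$. Choose $m=n-1$ (or any $m\ge n-1$). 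At $s=1-n$, $f(t)=(t+a)^{n-1}$ is a polynomial of degree $n-1$, so $f^{(m+1)}\equiv0$ and the remainder integral vanishes exactly. We are left with the finite sum $\chi(0)a^{n-1}+\sum_{k=0}^{n-1}\frac{(-1)^k}{(k+1)!}\beta_{k+1,\chi^-}(\{0\}_\chi,\lambda)\frac{(n-1)!}{(n-1-k)!}a^{n-1-k}$, together with $-\delta_{1,\lambda^q}\beta_{0,\chi^-}(\lambda)a^{n}/n$. Writing $\binom{n}{k+1}/n$, this becomes $-\frac1n\sum_{j=0}^{n}\binom{n}{j}(-1)^{j}\beta_{j,\chi^-}(\{0\}_\chi,\lambda)a^{n-j}$ up to the $\chi(0)$ term. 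Note that $\chi(0)=1$ only when $q=1$; this is why the cases $q=1$ and $q\ge2$ are stated separately.

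The main obstacle is the final identification. We must show that this Appell-type binomial expansion equals $\beta_{n,\chi}(a,\lambda^{-1})$ (recall $\lambda^{-1}=e^{2\pi\mathrm{i}x/q}$), or $\beta_n(a,\lambda^{-1})$ when $q=1$. We plan to do this at the level of generating functions. First compute the generating function of $\beta_{j,\chi^-}(\{0\}_\chi,\lambda)$ from its definition: for $r=1,\dots,q$ we have $\{r/q\}$ and $[r/q]$ explicitly (only $r=q$ is special), and each piece equals $q^{j-1}\beta_j(r/q,\lambda^q)$ times a power of $\lambda$. Multiplying by $e^{-at}$ for the binomial convolution and using $t\mapsto -t$, we then need the reflection identity $\beta_j(1-y,\lambda^{-1})=(-1)^j\lambda\,\beta_j(y,\lambda)$, which follows from \eqref{eq2.2}. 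The aim is to turn $\sum_r\chi(-r)\lambda^{r}\cdots$ into $\sum_r\chi(r)\lambda^{-r}\cdots$ and match \eqref{eq2.17}. For $q=1$ the extra $a^{n-1}=\chi(0)a^{n-1}$ term must be absorbed, and this is exactly the $r=q$ boundary shift. If the bookkeeping of $\lambda$-powers and the $r=q$ term becomes messy, we will instead verify the identity via the difference equation $\lambda^{-q}\beta_{n,\chi}(y+q,\lambda^{-1})-\beta_{n,\chi}(y,\lambda^{-1})$, which characterizes both sides as polynomials in $a$, and compare one coefficient.
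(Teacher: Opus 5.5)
Your proposal is correct and follows essentially the paper's route. You apply Theorem~\ref{thm2.1} to $(t+a)^{-s}$ (the paper uses $(t+y)^{s}$), pass to the infinite sum, continue analytically, and evaluate at $s=1-n$, where $f$ becomes a polynomial and the remainder integral drops out; your generating-function/reflection identification is just a self-contained version of the paper's use of \eqref{eq2.27} together with \eqref{eq2.29}--\eqref{eq2.30}. One small point: with $m=n-1$ the point $s=1-n$ lies on the boundary of your stated region $\Re(s)>-m$ (the paper makes the same boundary choice), so take $m\geqslant n$, where $f^{(m+1)}\equiv 0$ still holds and the extra terms with $f^{(k)}(0)$, $k\geqslant n$, vanish at $s=1-n$.
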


\begin{proof} Taking $a=0$, $b=n$ and $f(t)=(t+y)^{s}$, and replacing $m$ by $m-1$ in Theorem \ref{thm2.1}, we find that for $m\in\mathbb{N}$, $n\in\mathbb{N}_{0}$, $y\in\mathbb{R}$ with $y>0$, $\lambda\in\mathbb{C}\setminus\{0\}$, and $s\in\mathbb{C}\setminus\{-1\}$,
\begin{eqnarray}\label{eq2.18}
&&\sum_{k=0}^{n}\chi(k)\lambda^{-k}(k+y)^{s}\nonumber\\
&&\qquad=\sum_{k=2}^{m}\binom{s}{k-1}\frac{(-1)^{k}}{k}\beta_{k,\chi^{-}}(\{n\}_{\chi},\lambda)(n+y)^{s+1-k}\nonumber\\
&&\qquad\quad-\sum_{k=2}^{m}\binom{s}{k-1}\frac{(-1)^{k}}{k}\beta_{k,\chi^{-}}(\{0\}_{\chi},\lambda)y^{s+1-k}\nonumber\\
&&\qquad\quad+\chi(0)y^{s}-\beta_{1,\chi^{-}}(\{n\}_{\chi},\lambda)(n+y)^{s}+\beta_{1,\chi^{-}}(\{0\}_{\chi},\lambda)y^{s}\nonumber\\
&&\qquad\quad+\delta_{1,\lambda^{q}}\beta_{0,\chi^{-}}(\lambda)\biggl(\frac{(n+y)^{s+1}}{s+1}-\frac{y^{s+1}}{s+1}\biggl)\nonumber\\
&&\qquad\quad+(-1)^{m-1}\binom{s}{m}\int_{0}^{n}\beta_{m,\chi^{-}}(\{t\}_{\chi},\lambda)(t+y)^{s-m}dt,
\end{eqnarray}
where the summation on the right hand side of \eqref{eq2.18} vanishes when $m=1$, and $\binom{s}{k}$ are the generalized binomial coefficients defined by
\begin{equation*}
\binom{s}{0}=1,\quad\binom{s}{k}=\frac{s(s-1)\cdots(s-k+1)}{k!}\quad(k\in\mathbb{N}).
\end{equation*}
Let $\overline{\beta}_{n,\chi}(x,\lambda)$ be the $n$-th generalized Apostol-Bernoulli function defined (see, e.g., \cite[Definition 2.7]{he}):
\begin{equation}\label{eq2.19}
\overline{\beta}_{n,\chi}(x,\lambda)=\beta_{n,\chi}(\{x\}_{\chi},\lambda)+\frac{1}{2}\delta_{1,n}\delta_{\mathbb{Z}}(x)\lambda^{-x}\chi(-x),
\end{equation}
where $\beta_{n,\chi}(\{x\}_{\chi},\lambda)=\chi(-1)\beta_{n,\chi^{-}}(\{x\}_{\chi},\lambda)$, $\delta_{\mathbb{Z}}(x)=1$ if $x\in\mathbb{Z}$ and $0$ otherwise. This function is shown in \cite[Theorem 3.1]{he} to have the Fourier series
\begin{equation}\label{eq2.20}
\overline{\beta}_{n,\chi}(x,\lambda)=-\frac{q^{n-1}n!}{\lambda^{x}(2\pi \mathrm{i})^{n}}\sideset{}{'}\sum_{k=-\infty}^{+\infty}\frac{G(k,\chi)e^{\frac{2\pi\mathrm{i}kx}{q}}}{(k-\frac{q\log\lambda}{2\pi\mathrm{i}})^{n}},
\end{equation}
where the dash denotes throughout that undefined terms are excluded from the sum, and $G(k,\chi)$ is the Gauss sum defined by
\begin{equation*}
G(k,\chi)=\sum_{r=1}^{q}\chi(r)e^{\frac{2\pi \mathrm{i}rk}{q}}.
\end{equation*}
Hence, we see from \eqref{eq2.11}, \eqref{eq2.19} and \eqref{eq2.20} that if $|\lambda|=1$ and $\Re(s)<m-1$, then the integral
\begin{equation*}
R(n)=\int_{n}^{\infty}\beta_{m,\chi^{-}}(\{t\}_{\chi},\lambda)(t+y)^{s-m}dt
\end{equation*}
converges absolutely, and $|R(n)|\rightarrow0$ as $n\rightarrow\infty$. So, when $|\lambda|=1$, $\Re(s)<m-1$ and $s\neq-1$, we can write \eqref{eq2.18} as
\begin{eqnarray}\label{eq2.21}
&&\sum_{k=0}^{n}\chi(k)\lambda^{-k}(k+y)^{s}\nonumber\\
&&\qquad=\sum_{k=2}^{m}\binom{s}{k-1}\frac{(-1)^{k}}{k}\beta_{k,\chi^{-}}(\{n\}_{\chi},\lambda)(n+y)^{s+1-k}\nonumber\\
&&\qquad\quad-\sum_{k=2}^{m}\binom{s}{k-1}\frac{(-1)^{k}}{k}\beta_{k,\chi^{-}}(\{0\}_{\chi},\lambda)y^{s+1-k}\nonumber\\
&&\qquad\quad+\chi(0)y^{s}-\beta_{1,\chi^{-}}(\{n\}_{\chi},\lambda)(n+y)^{s}+\beta_{1,\chi^{-}}(\{0\}_{\chi},\lambda)y^{s}\nonumber\\
&&\qquad\quad+\delta_{1,\lambda^{q}}\beta_{0,\chi^{-}}(\lambda)\biggl(\frac{(n+y)^{s+1}}{s+1}-\frac{y^{s+1}}{s+1}\biggl)\nonumber\\
&&\qquad\quad+(-1)^{m-1}\binom{s}{m}\int_{0}^{\infty}\beta_{m,\chi^{-}}(\{t\}_{\chi},\lambda)(t+y)^{s-m}dt\nonumber\\
&&\qquad\quad+(-1)^{m}\binom{s}{m}\int_{n}^{\infty}\beta_{m,\chi^{-}}(\{t\}_{\chi},\lambda)(t+y)^{s-m}dt.
\end{eqnarray}
In particular, for $\Re(s)<-1$, letting $n\rightarrow\infty$ in \eqref{eq2.21}, we obtain
\begin{eqnarray}\label{eq2.22}
&&\sum_{k=0}^{\infty}\frac{\chi(k)\lambda^{-k}}{(k+y)^{-s}}\nonumber\\
&&\qquad=-\sum_{k=2}^{m}\binom{s}{k-1}\frac{(-1)^{k}}{k}\beta_{k,\chi^{-}}(\{0\}_{\chi},\lambda)y^{s+1-k}\nonumber\\
&&\qquad\quad+\chi(0)y^{s}+\beta_{1,\chi^{-}}(\{0\}_{\chi},\lambda)y^{s}-\delta_{1,\lambda^{q}}\beta_{0,\chi^{-}}(\lambda)\frac{y^{s+1}}{s+1}\nonumber\\
&&\qquad\quad+(-1)^{m-1}\binom{s}{m}\int_{0}^{\infty}\beta_{m,\chi^{-}}(\{t\}_{\chi},\lambda)(t+y)^{s-m}dt.
\end{eqnarray}
Inserting \eqref{eq2.22} into \eqref{eq2.21}, we know that for $|\lambda|=1$,
\begin{eqnarray}\label{eq2.23}
&&\sum_{k=0}^{n}\chi(k)\lambda^{-k}(k+y)^{s}\nonumber\\
&&\qquad=\sum_{k=0}^{\infty}\frac{\chi(k)\lambda^{-k}}{(k+y)^{-s}}+\sum_{k=2}^{m}\binom{s}{k-1}\frac{(-1)^{k}}{k}\beta_{k,\chi^{-}}(\{n\}_{\chi},\lambda)(n+y)^{s+1-k}\nonumber\\
&&\qquad\quad-\beta_{1,\chi^{-}}(\{n\}_{\chi},\lambda)(n+y)^{s}+\delta_{1,\lambda^{q}}\beta_{0,\chi^{-}}(\lambda)\frac{(n+y)^{s+1}}{s+1}\nonumber\\
&&\qquad\quad+(-1)^{m}\binom{s}{m}\int_{n}^{\infty}\beta_{m,\chi^{-}}(\{t\}_{\chi},\lambda)(t+y)^{s-m}dt,
\end{eqnarray}
which holds for $\Re(s)\leqslant m-1$ and $s\neq-1$ by analytic continuation. Now, taking $s=m-1$ and $n=0$ in \eqref{eq2.23}, we have
\begin{eqnarray}\label{eq2.24}
&&\sum_{k=0}^{\infty}\frac{\chi(k)\lambda^{-k}}{(k+y)^{1-m}}\nonumber\\
&&\qquad=-\frac{1}{m}\sum_{k=0}^{m}\binom{m}{k}(-1)^{k}\beta_{k,\chi^{-}}(\{0\}_{\chi},\lambda)y^{m-k}+\chi(0)y^{m-1}.
\end{eqnarray}
Note that from \eqref{eq2.2} and \eqref{eq2.17} we have
\begin{equation}\label{eq2.25}
\beta_{n,\chi}(x,\lambda)=q^{n-1}\sum_{r=1}^{q}\chi(r)\lambda^{r}\beta_{n}\biggl(\frac{x+r}{q},\lambda^{q}\biggl)\quad(n\in\mathbb{N}_{0}).
\end{equation}
Moreover, for $n\in\mathbb{N}$ and $x\in\mathbb{R}$ (see, e.g., \cite[Propostion 2.8]{he}),
\begin{equation}\label{eq2.26}
\beta_{n,\chi}(\{x\}_{\chi},\lambda)=\beta_{n,\chi}(x,\lambda)-n\underset{\substack{0\leqslant r\leqslant x\\ r\in\mathbb{Z}}}{\sum}\chi(-r)\lambda^{-r}(x-r)^{n-1}.
\end{equation}
From \eqref{eq2.10}, \eqref{eq2.25} and \eqref{eq2.26}, it follows that for $n\in\mathbb{N}_{0}$,
\begin{equation}\label{eq2.27}
\beta_{n,\chi^{-}}(\{0\}_{\chi},\lambda)=\beta_{n,\chi^{-}}(0,\lambda)-\delta_{1,n}\chi(0).
\end{equation}
Applying \eqref{eq2.27} to the right-hand side of \eqref{eq2.24}, we have
\begin{equation}\label{eq2.28}
\sum_{k=0}^{\infty}\frac{\chi(k)\lambda^{-k}}{(k+y)^{1-m}}=-\frac{1}{m}\sum_{k=0}^{m}\binom{m}{k}(-1)^{k}\beta_{k,\chi^{-}}(0,\lambda)y^{m-k}.
\end{equation}
It is shown in \cite[Proposition 2.3]{he} that if $q=1$, then
\begin{equation}\label{eq2.29}
\beta_{n,\chi}(-x,\lambda)=(-1)^{n}\beta_{n}(x,\lambda^{-1})\quad(n\in\mathbb{N}_{0}),
\end{equation}
and if $q\geqslant2$, then
\begin{equation}\label{eq2.30}
\beta_{n,\chi}(-x,\lambda)=(-1)^{n}\beta_{n,\chi^{-}}(x,\lambda^{-1})\quad(n\in\mathbb{N}_{0}).
\end{equation}
Thus, replacing $\lambda$ by $\lambda^{-1}$ in \eqref{eq2.28}, with the help of \eqref{eq2.29} and \eqref{eq2.30}, we find from \eqref{eq2.2} and \eqref{eq2.17} that if $q=1$, then
\begin{eqnarray*}
\sum_{k=0}^{\infty}\frac{\chi(k)\lambda^{k}}{(k+y)^{1-m}}&=&-\frac{1}{m}\sum_{k=0}^{m}\binom{m}{k}\beta_{k}(0,\lambda)y^{m-k}\nonumber\\
&=&-\frac{\beta_{m}(y,\lambda)}{m},
\end{eqnarray*}
and if $q\geqslant2$, then
\begin{eqnarray*}
\sum_{k=0}^{\infty}\frac{\chi(k)\lambda^{k}}{(k+y)^{1-m}}&=&-\frac{1}{m}\sum_{k=0}^{m}\binom{m}{k}\beta_{k,\chi}(0,\lambda)y^{m-k}\nonumber\\
&=&-\frac{\beta_{m,\chi}(y,\lambda)}{m}.
\end{eqnarray*}
Now the desired result follows by taking $\lambda=e^{\frac{2\pi \mathrm{i}x}{q}}$ and replacing $y$ by $a$ and $m$ by $n$ in the above two identities. This completes the proof of Theorem \ref{thm2.6}.
\end{proof}

We note that formula \eqref{eq2.15} is due to Apostol \cite{apostol2}, who proved it using contour integration, while formula \eqref{eq2.16} improves upon the result presented in \cite[Corollary 3.7]{he}. In particular, if we take $x=0$ in \eqref{eq2.15}, then we know that for $n\in\mathbb{N}$ and $a\in\mathbb{R}$ with $a>0$,
\begin{equation}\label{eq2.31}
\zeta(1-n,a)=-\frac{B_{n}(a)}{n},
\end{equation}
where $\zeta(s,a)$ is the Hurwitz zeta-function given for $s\in\mathbb{C}$, $a\in\mathbb{R}$ with $a>0$ by
\begin{equation*}
\zeta(s,a)=\sum_{n=0}^{\infty}\frac{1}{(n+a)^{s}}\quad(\Re(s)>1).
\end{equation*}
In addition, if we take $x=1/2$ in \eqref{eq2.15}, then we find that for $n\in\mathbb{N}$ and $a\in\mathbb{R}$ with $a>0$,
\begin{equation}\label{eq2.32}
\eta(1-n,a)=\frac{1}{2}E_{n-1}(a),
\end{equation}
where $\eta(s,a)$ is the alternating Hurwitz-zeta function given for $s\in\mathbb{C}$, $a\in\mathbb{R}$ with $a>0$ by
\begin{equation*}
\eta(s,a)=\sum_{n=0}^{\infty}(-1)^{n}\frac{1}{(n+a)^{s}}\quad(\Re(s)>0).
\end{equation*}

\section{The proof of Theorem \ref{thm2.1}}\label{sec3}

We first prove that the case $a\geq0$ in \eqref{eq2.1} holds. Let
\begin{equation}\label{eq3.1}
R_{m+1}(\chi^{-},f)=\frac{(-1)^{m}}{(m+1)!}\int_{a}^{b}\beta_{m+1,\chi^{-}}(\{t\}_{\chi},\lambda)f^{(m+1)}(t)dt.
\end{equation}
By taking the derivative with respect to $x$ on both sides of \eqref{eq2.20}, we obtain that for $n\geqslant2$ and $x\in\mathbb{R}$,
\begin{equation}\label{eq3.2}
\frac{\partial}{\partial x}\overline{\beta}_{n,\chi}(x,\lambda)=n\overline{\beta}_{n-1,\chi}(x,\lambda).
\end{equation}
It follows from \eqref{eq2.19} and \eqref{eq3.2} that
\begin{equation}\label{eq3.3}
\frac{\partial}{\partial x}\beta_{n,\chi^{-}}(\{x\}_{\chi},\lambda)=n\beta_{n-1,\chi^{-}}(\{x\}_{\chi},\lambda)
\end{equation}
for $n\geqslant2$ and $x\in\mathbb{R}$, except when $n=2$ and $x$ belongs to a certain set of measure zero (namely, the set of $x$ for which there exists $k\in\{1,2,\ldots,q\}$ such that $q\mid(x+k)$). Hence, using integration by parts in \eqref{eq3.1}, we know from \eqref{eq3.3} that
\begin{eqnarray}\label{eq3.4}
&&R_{m+1}(\chi^{-},f)\nonumber\\
&&\qquad=\frac{(-1)^{m}}{(m+1)!}\bigl(\beta_{m+1,\chi^{-}}(\{b\}_{\chi},\lambda)f^{(m)}(b)-\beta_{m+1,\chi^{-}}(\{a\}_{\chi},\lambda)f^{(m)}(a)\bigl)\nonumber\\
&&\qquad\quad+R_{m}(\chi^{-},f)\nonumber\\
&&\qquad=\sum_{k=1}^{m}\frac{(-1)^{k}}{(k+1)!}\bigl(\beta_{k+1,\chi^{-}}(\{b\}_{\chi},\lambda)f^{(k)}(b)-\beta_{k+1,\chi^{-}}(\{a\}_{\chi},\lambda)f^{(k)}(a)\bigl)\nonumber\\
&&\qquad\quad+R_{1}(\chi^{-},f).
\end{eqnarray}
Now, taking $n=1$ in \eqref{eq2.26}, we find from \eqref{eq2.10} and \eqref{eq2.11} that if $\lambda^{q}=1$, then for $t\in\mathbb{R}$,
\begin{eqnarray}\label{eq3.5}
\beta_{1,\chi^{-}}(\{t\}_{\chi},\lambda)&=&\frac{t}{q}\sum_{r=1}^{q}\chi(-r)\lambda^{r}+\sum_{r=1}^{q}\chi(-r)\lambda^{r}\biggl(\frac{r}{q}-\frac{1}{2}\biggl)-\underset{\substack{0\leqslant r\leqslant t\\ r\in\mathbb{Z}}}{\sum}\chi(r)\lambda^{-r}\nonumber\\
&=&t\beta_{0,\chi^{-}}(\lambda)+\beta_{1,\chi^{-}}(\lambda)-\underset{\substack{0\leqslant r\leqslant t\\ r\in\mathbb{Z}}}{\sum}\chi(r)\lambda^{-r},
\end{eqnarray}
and if $\lambda^{q}\neq1$, then for $t\in\mathbb{R}$,
\begin{equation}\label{eq3.6}
\beta_{1,\chi^{-}}(\{t\}_{\chi},\lambda)=\frac{1}{\lambda^{q}-1}\sum_{r=1}^{q}\chi(-r)\lambda^{r}-\underset{\substack{0\leqslant r\leqslant t\\ r\in\mathbb{Z}}}{\sum}\chi(r)\lambda^{-r}.
\end{equation}
Clearly, from \eqref{eq3.5} and by using integration by parts, we obtain that for $\lambda^{q}=1$,
\begin{eqnarray}\label{eq3.7}
R_{1}(\chi^{-},f)&=&\int_{a}^{b}\bigl(t\beta_{0,\chi^{-}}(\lambda)+\beta_{1,\chi^{-}}(\lambda)-\underset{\substack{0\leqslant r\leqslant t\\ r\in\mathbb{Z}}}{\sum}\chi(r)\lambda^{-r}\bigl)f^{(1)}(t)dt\nonumber\\
&=&\bigl(b\beta_{0,\chi^{-}}(\lambda)+\beta_{1,\chi^{-}}(\lambda)\bigl)f(b)-\bigl(a\beta_{0,\chi^{-}}(\lambda)+\beta_{1,\chi^{-}}(\lambda)\bigl)f(a)\nonumber\\
&&-\beta_{0,\chi^{-}}(\lambda)\int_{a}^{b}f(t)dt-\int_{a}^{b}\underset{\substack{0\leqslant r\leqslant t\\ r\in\mathbb{Z}}}{\sum}\chi(r)\lambda^{-r}f^{(1)}(t)dt.
\end{eqnarray}
Observe that
\begin{eqnarray}\label{eq3.8}
&&\int_{a}^{b}\underset{\substack{0\leqslant r\leqslant t\\ r\in\mathbb{Z}}}{\sum}\chi(r)\lambda^{-r}f^{(1)}(t)dt\nonumber\\
&&\qquad=\int_{a}^{[a]+1}\underset{\substack{0\leqslant r\leqslant t\\ r\in\mathbb{Z}}}{\sum}\chi(r)\lambda^{-r}f^{(1)}(t)dt+\int_{[a]+1}^{[a]+2}\underset{\substack{0\leqslant r\leqslant t\\ r\in\mathbb{Z}}}{\sum}\chi(r)\lambda^{-r}f^{(1)}(t)dt\nonumber\\
&&\qquad\quad+\cdots+\int_{[b]}^{b}\underset{\substack{0\leqslant r\leqslant t\\ r\in\mathbb{Z}}}{\sum}\chi(r)\lambda^{-r}f^{(1)}(t)dt\nonumber\\
&&\qquad=\underset{\substack{0\leqslant r\leqslant a\\ r\in\mathbb{Z}}}{\sum}\chi(r)\lambda^{-r}\int_{a}^{b}f^{(1)}(t)dt\nonumber\\
&&\qquad\quad+\chi([a]+1)\lambda^{-([a]+1)}\int_{[a]+1}^{[a]+2}f^{(1)}(t)dt\nonumber\\
&&\qquad\quad+\cdots\nonumber\\
&&\qquad\quad+\bigl(\chi([a]+1)\lambda^{-([a]+1)}+\cdots+\chi([b])\lambda^{-[b]}\bigl)\int_{[b]}^{b}f^{(1)}(t)dt\nonumber\\
&&\qquad=\underset{\substack{0\leqslant r\leqslant a\\ r\in\mathbb{Z}}}{\sum}\chi(r)\lambda^{-r}\int_{a}^{b}f^{(1)}(t)dt+\underset{\substack{a<r\leqslant b\\ r\in\mathbb{Z}}}{\sum}\chi(r)\lambda^{-r}\int_{r}^{b}f^{(1)}(t)dt.
\end{eqnarray}
From \eqref{eq3.8}, it follows that
\begin{eqnarray}\label{eq3.9}
&&\int_{a}^{b}\underset{\substack{0\leqslant r\leqslant t\\ r\in\mathbb{Z}}}{\sum}\chi(r)\lambda^{-r}f^{(1)}(t)dt\nonumber\\
&&\qquad=f(b)\underset{\substack{0\leqslant r\leqslant b\\ r\in\mathbb{Z}}}{\sum}\chi(r)\lambda^{-r}-f(a)\underset{\substack{0\leqslant r\leqslant a\\ r\in\mathbb{Z}}}{\sum}\chi(r)\lambda^{-r}-\underset{\substack{a<r\leqslant b\\ r\in\mathbb{Z}}}{\sum}\chi(r)\lambda^{-r}f(r).
\end{eqnarray}
Inserting \eqref{eq3.9} into \eqref{eq3.7}, with the help of \eqref{eq3.5}, we know that for $\lambda^{q}=1$,
\begin{eqnarray}\label{eq3.10}
R_{1}(\chi^{-},f)&=&\beta_{1,\chi^{-}}(\{b\}_{\chi},\lambda)f(b)-\beta_{1,\chi^{-}}(\{a\}_{\chi},\lambda)f(a)\nonumber\\
&&-\beta_{0,\chi^{-}}(\lambda)\int_{a}^{b}f(t)dt+\underset{\substack{a< r\leqslant b\\ r\in\mathbb{Z}}}{\sum}\chi(r)\lambda^{-r}f(r).
\end{eqnarray}
Hence, we see from \eqref{eq3.4} and \eqref{eq3.10} that the condition $\lambda^{q}=1$ in \eqref{eq2.1} holds. Similarly, we obtain from \eqref{eq3.6} and \eqref{eq3.9} that for $\lambda^{q}\neq1$,
\begin{eqnarray}\label{eq3.11}
R_{1}(\chi^{-},f)
&=&\beta_{1,\chi^{-}}(\lambda)f(b)-\beta_{1,\chi^{-}}(\lambda)f(a)-\int_{a}^{b}\underset{\substack{0\leqslant r\leqslant t\\ r\in\mathbb{Z}}}{\sum}\chi(r)\lambda^{-r}f^{(1)}(t)dt\nonumber\\
&=&\beta_{1,\chi^{-}}(\{b\}_{\chi},\lambda)f(b)-\beta_{1,\chi^{-}}(\{a\}_{\chi},\lambda)f(a)\nonumber\\
&&+\underset{\substack{a< r\leqslant b\\ r\in\mathbb{Z}}}{\sum}\chi(r)\lambda^{-r}f(r).
\end{eqnarray}
Putting \eqref{eq3.11} into \eqref{eq3.4}, we see that the condition $\lambda^{q}\neq1$ in \eqref{eq2.1} holds. Therefore, \eqref{eq2.1} holds for the case $a\geqslant0$.

For a general $a$, we choose suitable $n\in\mathbb{Z}$ such that $a_{1}=a+nq\geqslant0$, and then set $b_{1}=b+nq$ and $f_{1}(x)=f(x-nq)$. Obviously, $f_{1}(x)$ is an $m+1$
times continuously differentiable function on the interval $[a_{1},b_{1}]$. Hence, from the above, we have
\begin{eqnarray}\label{eq3.12}
&&\sum_{\substack{a_{1}<k\leqslant b_{1}\\k\in\mathbb{Z}}}\chi(k)\lambda^{-k}f_{1}(k)\nonumber\\
&&\qquad=\sum_{k=0}^{m}\frac{(-1)^{k+1}}{(k+1)!}\bigl(\beta_{k+1,\chi^{-}}(\{b_{1}\}_{\chi},\lambda)f_{1}^{(k)}(b_{1})-\beta_{k+1,\chi^{-}}(\{a_{1}\}_{\chi},\lambda)f_{1}^{(k)}(a_{1})\bigl)\nonumber\\
&&\qquad\quad+\delta_{1,\lambda^{q}}\beta_{0,\chi^{-}}(\lambda)\int_{a_{1}}^{b_{1}}f_{1}(t)dt\nonumber\\
&&\qquad\quad+\frac{(-1)^{m}}{(m+1)!}\int_{a_{1}}^{b_{1}}\beta_{m+1,\chi^{-}}(\{t\}_{\chi},\lambda)f_{1}^{(m+1)}(t)dt.
\end{eqnarray}
Since $\chi(k+nq)=\chi(k)$ for $k\in\mathbb{Z}$ and $\beta_{n,\chi^{-}}(\{x+nq\}_{\chi},\lambda)=\lambda^{-nq}\beta_{n,\chi^{-}}(\{x\}_{\chi},\lambda)$ for $x\in\mathbb{R}$, we see that \eqref{eq3.12} can be rewritten as
\begin{eqnarray*}
&&\sum_{\substack{a<k\leqslant b\\k\in\mathbb{Z}}}\chi(k)\lambda^{-k}f(k)\nonumber\\
&&\qquad=\sum_{k=0}^{m}\frac{(-1)^{k+1}}{(k+1)!}\bigl(\beta_{k+1,\chi^{-}}(\{b\}_{\chi^{-}},\lambda)f^{(k)}(b)-\beta_{k+1,\chi^{-}}(\{a\}_{\chi^{-}},\lambda)f^{(k)}(a)\bigl)\nonumber\\
&&\qquad\quad+\delta_{1,\lambda^{q}}\beta_{0,\chi^{-}}(\lambda)\int_{a}^{b}f(t)dt\nonumber\\
&&\qquad\quad+\frac{(-1)^{m}}{(m+1)!}\int_{a}^{b}\beta_{m+1,\chi^{-}}(\{t\}_{\chi^{-}},\lambda)f^{(m+1)}(t)dt,
\end{eqnarray*}
as desired. This finishes the proof of Theorem \ref{thm2.1}.

\end{document}